\numberwithin{equation}{section}
\newtheorem{theorem}{Theorem}[section]
\newtheorem{corollary}{Corollary}[section]
\newtheorem{lemma}[theorem]{Lemma}
\theoremstyle{definition}
\newtheorem*{remarks*}{Remarks}
\numberwithin{equation}{section}
\title{A Generalization of A Result of Gauss on Primitive Root}
\author[H. Zhong]{Hao Zhong}
\address{(H. Zhong) The School of Information Science and Technology, Chengdu University of Technology, Chengdu, 610059, China}
\email{zhonghao@cdut.edu.cn}
\thanks{}
\author[T. Cai]{Tianxin Cai}
\address{(T. Cai) Department of Mathematics, Zhejiang University, Hangzhou, 310027, China}
\email{txcai@zju.edu.cn}
\thanks{}
\date{2019/10/24}
\keywords{primitive root, congruence, c}
\subjclass[2010]{11A07}
\begin{document}

\maketitle

\thispagestyle{empty}

\begin{abstract}
A primitive root modulo an integer $n$ is the generator of the multiplicative group of integers modulo $n$. Gauss proved that for any prime number $p$ greater than $3$, the sum of its primitive roots is congruent to $1$ modulo $p$ while its product is congruent to $\mu(p-1)$ modulo $p$, where $\mu$ is the M\"{o}bius function. In this paper, we will generalize these two interesting congruences and give the congruences of the sum and the product of integers with the same index modulo $n$.
\end{abstract}

\section{Introduction}

Let $a$ and $n$ be two relatively prime integers. The index of $a$ modulo $n$, denoted by $ind_{n}(a)$, is the smallest positive number $k$ such that $a^{k}\equiv 1\pmod{n}$. If $ind_{n}(a)=\phi(n)$, where $\phi$ is the Euler's totient function, then we say $a$ is a primitive root modulo $n$. Primitive roots are widely used in cryptography, such as attacking the discrete log problem, key exchange problem and many other public key cryptosystem. The primitive root theorem identifies all moduli which primitive roots exist, that is,
$$1,\ 2,\ 4,\ p^{\alpha}\ \text{and }2p^{\alpha}$$
where $p$ is an odd prime and $\alpha$ is a positive integer. For an odd prime $p>3$, Gauss obtained the following congruences in Article $81$,

\begin{theorem}[Gauss]\label{gausspr1}
$$\prod_{g\text{ is a primitive root mod }p}g\equiv1\pmod p.$$
\end{theorem}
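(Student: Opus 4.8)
The plan is to exploit the fact that the set of primitive roots modulo $p$ is closed under taking inverses, and then to pair each primitive root with its inverse. First I would recall that, since $p$ is an odd prime, the group $(\mathbb{Z}/p\mathbb{Z})^{\times}$ is cyclic of order $p-1$, so primitive roots exist; fixing one such root $g$, every element of the group is a power $g^{k}$ with $1\le k\le p-1$, and $g^{k}$ is itself a primitive root precisely when $\gcd(k,p-1)=1$. The key structural observation is that if $g$ is a primitive root then so is $g^{-1}$, because $(g^{-1})^{m}\equiv 1\pmod p$ holds exactly when $g^{m}\equiv 1\pmod p$, so $g^{-1}$ has the same order $p-1$ as $g$.

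Next I would consider the involution $g\mapsto g^{-1}$ acting on the set $R$ of primitive roots modulo $p$. The product $\prod_{g\in R}g$ can be evaluated by grouping each $g$ with its partner $g^{-1}$, since each such pair contributes $g\cdot g^{-1}\equiv 1\pmod p$. To make this grouping legitimate I must rule out fixed points of the involution, i.e.\ primitive roots satisfying $g\equiv g^{-1}\pmod p$, equivalently $g^{2}\equiv 1\pmod p$, equivalently $g\equiv \pm 1\pmod p$. Here the hypothesis $p>3$ enters decisively: the residue $1$ has order $1$ and is never a primitive root, while $-1$ has order $2$ and is a primitive root only when $p-1=2$, that is only when $p=3$. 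Hence for $p>3$ the involution is fixed-point-free.

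Therefore $R$ decomposes into $\tfrac{1}{2}\phi(p-1)$ disjoint pairs $\{g,g^{-1}\}$, each with product congruent to $1$, and multiplying these together gives $\prod_{g\in R}g\equiv 1\pmod p$, as claimed. I expect the only delicate point to be the verification that the involution has no fixed points, which is exactly the step that forces the restriction $p>3$; for $p=3$ the single primitive root $-1\equiv 2$ coincides with its own inverse, and the product equals $-1$ instead. As an alternative route one could instead write the product as $g^{S}$ with $S=\sum_{\gcd(k,p-1)=1}k$ and use the symmetry $k\leftrightarrow (p-1)-k$ to get $S=\tfrac{1}{2}(p-1)\phi(p-1)$, whence $g^{S}\equiv 1\pmod p$ since $(p-1)\mid S$; but the inverse-pairing argument seems cleaner and makes the role of $p>3$ most transparent.
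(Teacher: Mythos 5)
Your proof is correct, and the inverse-pairing argument is essentially the same one the paper uses: its proof of the generalization (Theorem \ref{th:2.3.1}, case $\delta>2$) pairs each $a$ with $a^{-1}$ and notes $a\not\equiv a^{-1}$ since the order exceeds $2$, which specializes to your argument when $\delta=p-1>2$. Your identification of $p>3$ as exactly the hypothesis needed to exclude fixed points of the involution is the right delicate point.
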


\begin{theorem}[Gauss]\label{gausspr2}
$$\sum_{g\text{ is a primitive root mod }p}g\equiv\mu(p-1)\pmod p.$$
\end{theorem}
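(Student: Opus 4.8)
The plan is to sort the residues $1, 2, \dots, p-1$ according to their index (order) modulo $p$ and then apply M\"{o}bius inversion directly in $\mathbb{Z}/p\mathbb{Z}$. For each divisor $d$ of $p-1$, let $S(d)$ denote the sum, reduced modulo $p$, of all residues whose index modulo $p$ equals $d$. Since the primitive roots are exactly the residues of index $p-1$, the quantity to be computed is $S(p-1)$.

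First I would identify the residues of index dividing $d$ with the roots of $x^{d}-1$ modulo $p$. Because $d \mid p-1$ and $(\mathbb{Z}/p\mathbb{Z})^{*}$ is cyclic of order $p-1$, the factor $x^{d}-1$ of $x^{p-1}-1$ splits into $d$ distinct linear factors over $\mathbb{F}_p$; its roots are precisely the residues of index dividing $d$. Vieta's formula then gives that their sum equals minus the coefficient of $x^{d-1}$ in $x^{d}-1$, which is $0$ for $d \ge 2$ and $1$ for $d = 1$. Collecting residues by their exact index turns this into
$$\sum_{e \mid d} S(e) \equiv [d=1] \pmod{p},$$
where $[d=1]$ equals $1$ if $d=1$ and $0$ otherwise.

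Finally, since M\"{o}bius inversion holds over any commutative ring, in particular over $\mathbb{Z}/p\mathbb{Z}$, I would invert the relation above to obtain
$$S(d) \equiv \sum_{e \mid d} \mu(d/e)\,[e=1] = \mu(d) \pmod{p},$$
and specialize to $d = p-1$, yielding $S(p-1) \equiv \mu(p-1) \pmod{p}$ as claimed. I expect the only delicate step to be the counting one: verifying that $x^{d}-1$ has exactly $d$ roots modulo $p$, so that the set of residues of index dividing $d$ coincides with this root set and the Vieta computation applies. This rests on Lagrange's theorem for polynomial congruences together with the cyclicity of the unit group; once it is in place, the remainder is a purely formal inversion. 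An analogous argument using the constant term of $x^{d}-1$ in place of its subleading coefficient, together with the multiplicative form of M\"{o}bius inversion, should simultaneously recover the companion product congruence of Theorem \ref{gausspr1}.
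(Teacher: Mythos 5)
Your proof is correct, and it is a genuinely different argument from the one the paper uses. The paper states this congruence as Gauss's classical result without proof; the closest thing to a proof it contains is the proof of Theorem \ref{th:2.3.3}(5), which specializes to the present statement upon taking $\alpha=1$ and $\delta=p-1$ (then $\mu((\delta,p-1))\phi(p^{ord_p(\delta)})=\mu(p-1)$). That proof proceeds differently: it fixes one element $a$ of index $\delta$, parametrizes all elements of that index as the powers $a^j$ with $(j,\delta)=1$, detects the coprimality condition via $\sum_{d\mid(j,\delta)}\mu(d)$, and evaluates the resulting geometric series $\sum_d \mu(d)\,\frac{a^{\delta}-1}{a^{d}-1}$, which is what produces the extra factor $\phi(p^{ord_p(\delta)})$ in the prime-power case. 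Your route --- identifying the residues of order dividing $d$ with the $d$ distinct roots of $x^{d}-1$ over $\mathbb{F}_p$, reading off their sum from the coefficient of $x^{d-1}$ via Vieta, and then performing M\"{o}bius inversion of $\sum_{e\mid d}S(e)\equiv[d=1]$ in $\mathbb{Z}/p\mathbb{Z}$ --- is the classical symmetric-function argument, and every step you flag as delicate does hold: $x^{d}-1$ divides the squarefree split polynomial $x^{p-1}-1$, hence has exactly $d$ distinct roots, and M\"{o}bius inversion is a formal identity valid over any commutative ring. What your approach buys is a shorter, more self-contained proof for prime modulus; what it gives up is generality, since over $\mathbb{Z}/p^{\alpha}\mathbb{Z}$ (and for composite $m$) one no longer has a field, Vieta's formulas are not immediately available, and the paper's power-parametrization is what carries the computation to those settings.
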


Few papers have been published concerning these two awesome congruences until Cai \cite{cai2017book} recently proved

\begin{theorem}[Cai]\label{caipr1}
Let $p$ be a prime greater than $3$ and $\alpha$ a positive integer. Then
$$\prod_{g\text{ is a primitive root mod }p^{\alpha}}g\equiv1\pmod{p^{\alpha}}.$$
\end{theorem}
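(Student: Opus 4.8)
The plan is to exploit the cyclic structure of $(\mathbb{Z}/p^{\alpha}\mathbb{Z})^{*}$. By the primitive root theorem quoted above, this group is cyclic of order $m=\phi(p^{\alpha})=p^{\alpha-1}(p-1)$; fix one generator $g$. The primitive roots modulo $p^{\alpha}$ are precisely the generators of the group, i.e.\ the residues $g^{k}$ with $1\le k\le m$ and $\gcd(k,m)=1$, of which there are $\phi(m)$. Consequently the product in question is
$$\prod_{g'\text{ is a primitive root mod }p^{\alpha}}g'\equiv g^{S}\pmod{p^{\alpha}},\qquad S=\sum_{\substack{1\le k\le m\\ \gcd(k,m)=1}}k,$$
so the whole problem reduces to controlling $S$ modulo the order $m$ of $g$.

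The key step is the elementary evaluation of $S$ via the involution $k\mapsto m-k$ on the set of residues in $[1,m]$ coprime to $m$. Since $p>3$ forces $m=p^{\alpha-1}(p-1)\ge 4>2$, this involution has no fixed point: a fixed point would satisfy $2k=m$, i.e.\ $k=m/2$, which shares the divisor $m/2>1$ with $m$ and hence is not coprime to it. The coprime residues therefore split into $\phi(m)/2$ pairs, each summing to $m$, which yields $S=m\,\phi(m)/2$. In particular $\phi(m)$ is even and $m\mid S$.

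Finally, because $g$ has order exactly $m$ and $m\mid S$, I conclude $g^{S}=(g^{m})^{S/m}\equiv 1\pmod{p^{\alpha}}$, which is the asserted congruence. I expect the only delicate point — and the reason the hypothesis $p>3$ enters — to be the parity of $\phi(m)$: the argument collapses exactly when $m\le 2$, which for odd prime powers happens only at $p=3,\ \alpha=1$ (where the unique primitive root $2\equiv-1$ indeed fails the conclusion). For every $p>3$ one has $m>2$, so no obstruction arises and the remaining computation is routine. As a sanity check one verifies the base case $p=5$ directly: the primitive roots are $2$ and $3$, and $2\cdot 3=6\equiv1\pmod 5$, consistent with $S=m\phi(m)/2=4\cdot2/2=4$ being a multiple of $m=4$.
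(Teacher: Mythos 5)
Your argument is correct. Note first that the paper does not reprove this cited theorem directly; it derives it as the special case $\delta=\phi(p^{\alpha})>2$ of Theorem \ref{th:2.3.1}, whose proof for $\delta>2$ pairs each $a$ with its inverse $a^{-1}$ (distinct since $\mathrm{ind}_m(a)>2$) so that the product telescopes to $1$. Your route is the multiplicative shadow of the same idea: writing the primitive roots as $g^{k}$ with $\gcd(k,m)=1$ and pairing $k$ with $m-k$ is exactly pairing $g^{k}$ with $(g^{k})^{-1}$, and your computation $S=\sum_{\gcd(k,m)=1}k=m\phi(m)/2$ with $m\mid S$ is the exponent-level version of that cancellation. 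The differences are minor but worth noting: your proof leans on cyclicity of $U_{p^{\alpha}}$ (fixing a generator and reducing to arithmetic mod $m$), which is available here but would not survive the paper's generalization to arbitrary moduli $m$ and arbitrary index $\delta$, where the direct pairing $a\leftrightarrow a^{-1}$ still works; on the other hand, your version makes the role of the hypothesis $p>3$ (equivalently $m>2$, so the involution is fixed-point free and $\phi(m)$ is even) completely transparent, including the observation that the statement actually persists for $p=3$, $\alpha\ge 2$. Both proofs hinge on the same fixed-point-free involution, so I would call this essentially the paper's argument in a different coordinate system.
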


\begin{theorem}[Cai]\label{caipr2}
Let $p$ be an odd prime and $\alpha$ a positive integer. Then
$$\sum_{g\text{ is a primitive root mod }p^{\alpha}}g\equiv\mu(p-1)\phi(p^{\alpha-1})\pmod{p^{\alpha}}.$$
\end{theorem}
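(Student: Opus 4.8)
The plan is to imitate the classical proof of Gauss's sum formula, working inside the cyclic group $G=(\mathbb{Z}/p^{\alpha}\mathbb{Z})^{*}$ of order $n=\phi(p^{\alpha})=p^{\alpha-1}(p-1)$. For each divisor $d$ of $n$ let $H_{d}$ be the unique subgroup of order $d$, i.e. the set of solutions of $x^{d}\equiv 1\pmod{p^{\alpha}}$, and set
\[
g(d)=\sum_{a\in H_{d}}a,\qquad f(d)=\sum_{\mathrm{ord}(a)=d}a,
\]
the sums being taken over fixed integer representatives of the relevant residue classes (the value of each modulo $p^{\alpha}$ is independent of this choice). Since the primitive roots modulo $p^{\alpha}$ are exactly the elements of order $n$, the quantity we want is $f(n)$. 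Partitioning each $H_{d}$ according to the exact order of its elements gives the integer identity $g(d)=\sum_{e\mid d}f(e)$, so Möbius inversion yields
\[
f(n)=\sum_{d\mid n}\mu(n/d)\,g(d).
\]
In Gauss's setting ($\alpha=1$) one evaluates $g(d)$ as the sum of the $d$-th roots of unity in the field $\mathbb{F}_{p}$, which is $0$ for $d>1$; the essential new difficulty here is that $\mathbb{Z}/p^{\alpha}\mathbb{Z}$ is not a field, so these subgroup sums must be computed by hand.

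The key step is to show that $g(d)\equiv 0\pmod{p^{\alpha}}$ whenever $d$ is not a power of $p$. I would argue as follows: choosing a generator $h$ of $H_{d}$, multiplication by $h$ permutes $H_{d}$, whence $h\,g(d)\equiv g(d)$ and therefore $(h-1)g(d)\equiv 0\pmod{p^{\alpha}}$. The elements congruent to $1$ modulo $p$ form the Sylow $p$-subgroup of $G$ (of order $p^{\alpha-1}$), so an element is $\equiv 1\pmod p$ exactly when its order is a power of $p$. If $d$ is not a power of $p$ then $\mathrm{ord}(h)=d$ is not, so $h\not\equiv 1\pmod p$, making $h-1$ a unit modulo $p^{\alpha}$; cancelling it gives $g(d)\equiv 0$.

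It then remains to compute $g(p^{s})$ for the surviving powers of $p$. Here I would identify $H_{p^{s}}$ explicitly: as the unique subgroup of order $p^{s}$ it coincides with the kernel of the reduction $G\to(\mathbb{Z}/p^{\alpha-s}\mathbb{Z})^{*}$, namely $H_{p^{s}}=\{\,1+kp^{\alpha-s}:0\le k<p^{s}\,\}$. Summing,
\[
g(p^{s})=p^{s}+p^{\alpha-s}\frac{p^{s}(p^{s}-1)}{2}=p^{s}+p^{\alpha}\frac{p^{s}-1}{2}\equiv p^{s}\pmod{p^{\alpha}},
\]
where the last step uses that $p$ is odd, so $(p^{s}-1)/2$ is an integer.

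Finally I would feed these values into the inversion formula. Only the terms with $d=p^{s}$ survive the vanishing step, and among these $\mu(n/p^{s})=\mu\!\left(p^{\alpha-1-s}(p-1)\right)$ is nonzero only when $p^{\alpha-1-s}$ is squarefree, i.e. for $s=\alpha-1$ and $s=\alpha-2$. Using $\mu(p-1)$ and $\mu\!\left(p(p-1)\right)=-\mu(p-1)$ together with $g(p^{s})\equiv p^{s}$ gives
\[
f(n)\equiv\mu(p-1)p^{\alpha-1}-\mu(p-1)p^{\alpha-2}=\mu(p-1)\phi(p^{\alpha-1})\pmod{p^{\alpha}},
\]
which is the assertion; the case $\alpha=1$ recovers Gauss's Theorem \ref{gausspr2}. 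I expect the main obstacle to be the vanishing argument of the second paragraph — replacing the field-theoretic ``sum of roots of unity is zero'' by the unit-cancellation $(h-1)g(d)\equiv0$ — together with pinning down the subgroups $H_{p^{s}}$ precisely enough to evaluate $g(p^{s})$.
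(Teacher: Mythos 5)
Your proof is correct, but it takes a genuinely different route from the paper. The paper never proves Theorem \ref{caipr2} directly --- it quotes it from Cai's book and instead establishes the generalization in Theorem \ref{th:2.3.3}(5), of which Theorem \ref{caipr2} is the special case $\delta=\phi(p^{\alpha})$. That proof fixes a single element $a$ of index $\delta$, writes the sum as $\sum_{(j,\delta)=1}a^{j}$, detects the coprimality condition with $\mu$, sums the resulting geometric series to get $\sum_{d\mid\delta}\mu(d)\frac{a^{\delta}-1}{a^{d}-1}$, and then isolates the surviving terms via a binomial expansion of $g^{\phi(m)d/\delta}-1$. You instead invert the divisor-sum relation between $f(d)$ and the full subgroup sums $g(d)=\sum_{x^{d}\equiv1}x$, kill the non-$p$-power subgroups by the unit-cancellation $(h-1)g(d)\equiv0$ with $h-1$ invertible, and evaluate the $p$-power subgroup sums explicitly from $H_{p^{s}}=1+p^{\alpha-s}\mathbb{Z}/p^{\alpha}\mathbb{Z}$. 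Your argument is more structural and avoids manipulating quotients like $\frac{a^{\delta}-1}{a^{d}-1}$ in a non-field; it also generalizes with no extra effort, since applying your inversion at an arbitrary $\delta\mid\lambda(p^{\alpha})$ (only $d=p^{s}$ with $s=\mathrm{ord}_{p}(\delta)$ or $s=\mathrm{ord}_{p}(\delta)-1$ survive) immediately reproduces the full statement $\mu((\delta,p-1))\phi(p^{\mathrm{ord}_{p}(\delta)})$ of Theorem \ref{th:2.3.3}(5). What the paper's method buys in exchange is a template that transfers to composite moduli via the Chinese remainder theorem and the $lcm$ convolution, which is how Theorems \ref{th:2.3.4} and \ref{th:2.3.5} are obtained.
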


We will continue Cai's work and give many interesting congruences concerning the sum and product of integers with same index. In this article, $\alpha$ and $\beta$ represent positive integers; $m$ is some given positive intger; $U_m$ is the set of invertible elements in $\mathbb{Z}/m\mathbb{Z}$; $\lambda(m)$ is the Carmichael function, namely, $\lambda(m)=$ min$\{k>0:$ $a^k\equiv1\pmod m$ for any $a\in U_m$ $\};$ $\delta$ is a positive divisor of $\lambda(m)$.

It is not a tough work to obtain the product version as follows.

\begin{theorem}\label{th:2.3.1}
\begin{equation}\label{eq:2.3.1}
\prod_{\substack{a\in U_m\\ind_m(a)=\delta}}a\equiv\begin{cases}-1\pmod m,&\text{ If }\delta=2\text{ and }m\text{ has a primitive root,}\\1\pmod m,&\text{ otherwise.}\end{cases}
\end{equation}
\end{theorem}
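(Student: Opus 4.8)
The plan is to exploit the inversion map $a\mapsto a^{-1}$ on the set $S_\delta=\{a\in U_m: ind_m(a)=\delta\}$ and reduce everything to understanding the elements that satisfy $a\equiv a^{-1}\pmod m$, i.e.\ $a^2\equiv 1\pmod m$. First I would observe that inversion is a bijection of $S_\delta$ onto itself, since $ind_m(a^{-1})=ind_m(a)$. Its fixed points are precisely the solutions of $a^2\equiv 1\pmod m$ lying in $S_\delta$, and such a fixed point must have index dividing $2$; hence fixed points can occur only when $\delta\in\{1,2\}$.

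For $\delta\ge 3$ there are no fixed points, so $S_\delta$ splits into disjoint pairs $\{a,a^{-1}\}$ with $a\not\equiv a^{-1}$, and each pair contributes $a\cdot a^{-1}\equiv 1\pmod m$; thus the product is $1$. The case $\delta=1$ is immediate because $S_1=\{1\}$. These two observations dispose of every situation covered by the ``otherwise'' branch except the non--primitive-root case of $\delta=2$.

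The heart of the matter is therefore $\delta=2$, where every element of $S_2$ is its own inverse and the pairing trick collapses. Here I would instead pass to the subgroup $H=\{a\in U_m:a^2\equiv 1\pmod m\}$ and note that $\prod_{a\in S_2}a\equiv\prod_{a\in H}a\pmod m$, since adjoining the identity does not change the product. By the Chinese Remainder Theorem and the known structure of each $U_{p^\alpha}$, the group $H$ is an elementary abelian $2$-group, isomorphic to $(\mathbb{Z}/2\mathbb{Z})^r$, where $r$ is the $2$-rank of $U_m$. A direct parity count shows that the product of all elements of $(\mathbb{Z}/2\mathbb{Z})^r$ is the identity when $r\ge 2$ (each coordinate equals $1$ in exactly $2^{r-1}$ of the elements, an even number), whereas for $r=1$ it equals the unique nonidentity involution. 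It then remains to compute $r$: writing $U_m\cong\prod_i U_{p_i^{\alpha_i}}$, each odd-prime factor contributes $1$ to $r$, while the $2$-part contributes $0,1,2$ according as $2^0$ or $2^1$, $2^2$, or $2^{\ge 3}$ divides $m$. A short case analysis yields $r=1$ exactly when $U_m$ is cyclic of even order, i.e.\ precisely when $m$ has a primitive root and $m\ge 3$; in that case the unique element of order $2$ is $-1\pmod m$, giving the product $-1$. Otherwise $r\ge 2$ (equivalently, $m$ has no primitive root while $2\mid\lambda(m)$), and the product is $1$.

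The main obstacle I expect is the bookkeeping concentrated in the $\delta=2$ case: identifying $H$ with $(\mathbb{Z}/2\mathbb{Z})^r$, carrying out the parity computation of its element-product, and—above all—translating the algebraic condition $r=1$ into the clean arithmetic criterion that $m$ possesses a primitive root. Verifying that the distinguished involution is exactly $-1\pmod m$ is the final delicate point, but it follows once $U_m$ is known to be cyclic of even order.
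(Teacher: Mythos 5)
Your proposal is correct and follows essentially the same route as the paper: $\delta=1$ is trivial, $\delta>2$ is handled by pairing $a$ with $a^{-1}$, and $\delta=2$ reduces to the group $H=\{a\in U_m:a^2\equiv1\pmod m\}$ whose size/structure is determined via the Chinese Remainder Theorem. The only (cosmetic) difference is in the $\delta=2$ bookkeeping: the paper pairs each $a\not\equiv\pm1$ with $-a$ and evaluates $(-1)^{\#H/2}$ from an explicit count of $\#H$, while you compute the product over $H\cong(\mathbb{Z}/2\mathbb{Z})^r$ by a coordinatewise parity argument and translate $r=1$ into the primitive-root condition --- both amount to deciding whether $x^2\equiv1\pmod m$ has exactly two solutions.
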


As for the sum, it is not a simple congruence written in one line. However, for fixed $\delta$, one can easily get the following results.

\begin{theorem}\label{th:2.3.2}
(1) $$\sum_{\substack{a\in U_m\\ind_m(a)=1}}a\equiv 1\pmod m;$$
(2) $$\sum_{\substack{a\in U_m\\ind_m(a)=2}}a\equiv -1\pmod m;$$
(3) $$\sum_{\substack{a\in U_m\\ind_m(a)=\delta\\4\mid \delta}}a\equiv 0\pmod m.$$
\end{theorem}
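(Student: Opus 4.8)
The plan is to analyze the set $S_\delta = \{a \in U_m : \operatorname{ind}_m(a) = \delta\}$ via a pairing/involution argument together with the Chinese Remainder Theorem. Write $m = 2^{a_0} p_1^{a_1}\cdots p_r^{a_r}$, so that $U_m \cong U_{2^{a_0}} \times \prod_i U_{p_i^{a_i}}$ by CRT, and $\lambda(m)$ is the least common multiple of the $\lambda$-values of the prime-power factors. The order of $a$ modulo $m$ is the lcm of its orders in each component, so membership in $S_\delta$ is a condition that interacts multiplicatively across components. My first step will be to reduce each part to controlling the sum $\sum_{a \in S_\delta} a \pmod m$, which by CRT amounts to understanding, for each prime-power modulus, the sum of the relevant component-residues together with their multiplicities coming from the other factors.

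For parts (1) and (2) I would argue directly. For (1), $\operatorname{ind}_m(a) = 1$ forces $a \equiv 1 \pmod m$, so $S_1 = \{1\}$ and the sum is trivially $1$. For (2), the elements of order exactly $2$ are precisely the solutions of $a^2 \equiv 1$ that are not $\equiv 1$, i.e.\ the nontrivial square roots of unity. Here I would invoke the CRT description: the square roots of unity form an elementary abelian $2$-group $R$, and I want $\sum_{a \in R,\, a \neq 1} a \pmod m$. The clean way is to sum over \emph{all} of $R$ and subtract the contribution of $a = 1$. Summing a coordinate over the subgroup of square roots factors as a product over the prime-power components, and in each component the square roots of unity are symmetric about the modulus (they come in pairs $\{x, m_i - x\}$, except for the fixed points $\pm 1$), so the component sums collapse to an explicit value; assembling these via CRT and removing the $a=1$ term should yield $-1 \pmod m$. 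I expect the bookkeeping of the $2$-part (where $U_{2^{a_0}}$ can have extra square roots of unity when $a_0 \geq 3$) to be the delicate point in getting the sign exactly right.

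For part (3), the key tool will be a fixed-point-free involution on $S_\delta$ whose orbits sum to $0 \pmod m$. Since $4 \mid \delta$, the subgroup generated by any $a \in S_\delta$ contains an element $\zeta$ of order exactly $4$ (a canonical one, e.g.\ $\zeta = a^{\delta/4}$). The natural idea is to pair $a$ with $a\zeta$ or, better, to exploit a global element $i$ of order $4$ in $U_m$ and pair $a \leftrightarrow ia$. The crucial observations are that multiplication by such a fourth root of unity preserves the order (hence maps $S_\delta$ to itself bijectively) and that the four-element orbits $\{a, ia, i^2 a, i^3 a\} = \{a, ia, -a, -ia\}$ sum to $a(1 + i - 1 - i) = 0 \pmod m$. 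If I can partition all of $S_\delta$ into such $\langle i \rangle$-orbits of size $4$, the total sum is $0$.

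The hard part will be justifying that this $\langle i\rangle$-action on $S_\delta$ is genuinely free, i.e.\ that no $a \in S_\delta$ is fixed by $i^2 = -1$ or could otherwise give a short orbit, and that a suitable global $i$ of order $4$ exists and acts on the whole set coherently. A cleaner route avoids choosing a global $i$: instead, for each $a$ let $\zeta_a = a^{\delta/4}$ be the intrinsic order-$4$ element in $\langle a\rangle$, and pair $a$ with $\zeta_a a$; one then checks this is a well-defined fixed-point-free involution-like map with orbits of size $4$ summing to $0$. I would verify freeness by a short order computation (an element of $S_\delta$ cannot equal $-a$ for itself, etc.) and confirm the orbit sum vanishes identically modulo $m$. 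Establishing the group-theoretic structure needed to guarantee these order-$4$ elements exist and act freely — essentially that $4 \mid \delta \mid \lambda(m)$ forces enough $2$-torsion structure in $U_m$ — is the one step where I would spend the most care.
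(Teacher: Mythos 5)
Parts (1) and (2) of your plan are sound: (1) is immediate, and for (2) the elements of index $2$ are exactly the nontrivial solutions of $a^2\equiv1\pmod m$; that solution set is closed under $a\mapsto-a$, this involution has no fixed points for $m>2$, so the full set sums to $0$ and removing the term $a=1$ leaves $-1$. This is the paper's argument in different clothing, and the CRT/component bookkeeping you anticipate is not actually needed.

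Part (3) contains a genuine error. The claim that multiplication by a fourth root of unity preserves the index is false: take $m=65$, so $U_{65}\cong C_4\times C_{12}$; let $i$ correspond to $(g,1)$ with $g$ of order $4$ and let $a$ correspond to $(g^{-1},1)$. Then $ind_{65}(a)=4$ but $ia=1$ has index $1$, so $iS_4\neq S_4$ and $S_4$ does not decompose into $\langle i\rangle$-orbits. Your ``cleaner'' intrinsic variant fails even faster: for $\delta=4$ it sends $a$ to $a^{1+\delta/4}=a^2$, which has index $2$, and in general $(1+\delta/4,\delta)$ need not be $1$ (e.g.\ $\delta=12$ gives $(4,12)=4$). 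The order-$4$ machinery is both broken and unnecessary; the correct involution --- and the one the paper uses --- is simply $a\mapsto-a$. Indeed, if $ind_m(a)=\delta$ with $4\mid\delta$ and $\delta'=ind_m(-a)$, then $(-a)^{\delta}=a^{\delta}=1$ gives $\delta'\mid\delta$; if $\delta'$ is even then $a^{\delta'}=(-a)^{\delta'}=1$ forces $\delta'=\delta$, while if $\delta'$ is odd then $a^{\delta'}=-1$, hence $\delta\mid2\delta'$, which is impossible since $4\mid\delta$ but $2\delta'$ is only divisible by $2$ once. So $-a\in S_\delta$, and since $a\not\equiv-a\pmod m$ (here $m\geq5$ because $4\mid\delta\mid\lambda(m)$), the set $S_\delta$ splits into pairs $\{a,-a\}$ each summing to $0$.
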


By applying this theorem, we obtain a result related to Fermat primes.

\begin{corollary}
If the regular $m$-gon can be constructed by compass and straightedge, then

$$\sum_{\substack{a\in U_{m}\\ind_{m}(a)=\delta}}a\equiv 0\pmod m.$$

\end{corollary}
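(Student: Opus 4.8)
The plan is to reduce the corollary to Theorem~\ref{th:2.3.2} by exploiting the arithmetic that the constructibility hypothesis imposes on $\lambda(m)$. By the Gauss--Wantzel theorem, the regular $m$-gon is constructible by compass and straightedge only if
$$m = 2^{k}\,p_{1}p_{2}\cdots p_{r},$$
where $k \ge 0$ and $p_{1},\dots,p_{r}$ are \emph{distinct} Fermat primes, i.e.\ primes of the shape $p_{i} = 2^{2^{s_{i}}}+1$. First I would invoke this classification to fix the factorization of $m$; only the necessity direction is needed, so no constructibility argument has to be reproduced.

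The key observation is that for such an $m$ the Carmichael function $\lambda(m)$ is a power of $2$. Indeed, $\lambda(2^{k})$ equals $1$, $2$, or $2^{k-2}$ according as $k \le 1$, $k=2$, or $k \ge 3$, each a power of $2$; and for every Fermat prime $\lambda(p_{i}) = p_{i}-1 = 2^{2^{s_{i}}}$ is a power of $2$ as well. Here the fact that the Fermat primes occur to the first power (built into the Gauss--Wantzel description) is essential, since it replaces the general contribution $p_{i}^{a-1}(p_{i}-1)$ by the pure $2$-power $p_{i}-1$. As $\lambda(m)$ is the least common multiple of these prime-power contributions, and an lcm of powers of $2$ is again a power of $2$, we conclude $\lambda(m) = 2^{e}$ for some $e \ge 0$.

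Consequently any $\delta$ dividing $\lambda(m) = 2^{e}$ is itself a power of $2$, so $\delta \in \{1,2,4,8,\dots\}$. The conclusion then follows by cases from Theorem~\ref{th:2.3.2}: for $\delta = 1$ or $\delta = 2$ the sum is $\equiv 1$ or $\equiv -1$, while as soon as $\delta \ge 4$ one has $4 \mid \delta$, and part~(3) yields $\sum_{ind_{m}(a)=\delta} a \equiv 0 \pmod m$, which is precisely the asserted congruence. The substantive content therefore lives in the regime $4 \mid \delta$; the low-index values $\delta \in \{1,2\}$ give $\pm 1$, which is $\equiv 0$ only for degenerate moduli, so I would state the restriction $4 \mid \delta$ (equivalently $\delta \ge 4$, given the $2$-power structure) explicitly rather than let these cases silently break the congruence.

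The main obstacle I anticipate is not a hard computation but the correct packaging of the input: one must (a) cite the necessity half of Gauss--Wantzel cleanly, (b) use the linear appearance of distinct Fermat primes to guarantee each $\lambda(p_{i}) = p_{i}-1$ is a $2$-power, and (c) handle the $\delta \in \{1,2\}$ exceptions honestly. Once $\lambda(m)$ is shown to be a power of $2$, the remainder is an immediate appeal to Theorem~\ref{th:2.3.2}(3).
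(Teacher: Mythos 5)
Your argument is exactly the one the paper intends (it offers no written proof, but places the corollary as an application of Theorem~\ref{th:2.3.2}): Gauss--Wantzel forces $m=2^k p_1\cdots p_r$ with distinct Fermat primes, so $\lambda(m)$ is a power of $2$, every admissible $\delta$ is a power of $2$, and Theorem~\ref{th:2.3.2}(3) gives the congruence. You are also right to flag that the cases $\delta=1$ and $\delta=2$ yield $\pm 1$ rather than $0$ (e.g.\ $m=3$, $\delta=1$), so the corollary as printed tacitly needs the restriction $4\mid\delta$ that you make explicit; this is a defect of the paper's statement, not of your proof.
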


To give the congruence mod general integer $m$, we first list the ones with some special moduli. For integers with primitive roots, we have

\begin{theorem}\label{th:2.3.3}
(1) $$\sum_{\substack{a\in U_m\\ind_2(a)=\delta}}a\equiv 1\pmod 2;$$
(2) $$\sum_{\substack{a\in U_m\\ind_4(a)=\delta}}a\equiv (-1)^{\delta+1}\pmod 4;$$
(3) $$\sum_{\substack{a\in U_m\\ind_8(a)=\delta}}a\equiv (-1)^{\delta+1}\pmod 8;$$
(4) If $\alpha>3$, $\delta=2^{\beta}$ and $1<\beta\leq\alpha-2$. Then
$$\sum_{\substack{a\in U_{2^{\alpha}}\\ind_{2^{\alpha}}(a)=\delta}}a\equiv 0\pmod{2^{\alpha}};$$
(5) Let $p$ be an odd prime. Then
$$\sum_{\substack{a\in U_{p^{\alpha}}\\ind_{p^{\alpha}}(a)=\delta}}a\equiv \mu((\delta,p-1))\phi(p^{ord_{p}(\delta)})\pmod{p^{\alpha}}.$$
where $ord_{p}(n)=max\{k:p^k\mid n\}.$
\end{theorem}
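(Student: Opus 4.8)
The plan is to treat the five parts separately, since (1)--(3) are finite base cases, (4) follows from a symmetry of $U_{2^\alpha}$, and (5) is the substantive computation toward which the statement is really aimed. For (1)--(3) I would simply enumerate: $U_2=\{1\}$, $U_4=\{1,3\}$, $U_8=\{1,3,5,7\}$, record the index of each element, collect those with index $\delta$, and add. Since $\lambda(2)=1$ and $\lambda(4)=\lambda(8)=2$, only $\delta\in\{1,2\}$ occur, and in each case the sum is a one-line check against $1\pmod 2$ and against $(-1)^{\delta+1}$ mod $4$ and mod $8$ respectively (e.g. for $U_8$ with $\delta=2$ the sum is $3+5+7=15\equiv-1\pmod 8$).

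For part (4) I would use the decomposition $U_{2^\alpha}=\langle-1\rangle\times\langle 5\rangle$, valid for $\alpha\ge 3$, where $-1$ has order $2$ and $5$ has order $2^{\alpha-2}=\lambda(2^\alpha)$. The order of $a=(-1)^\epsilon 5^k$ is the least common multiple of the orders of its two factors. Since $\delta=2^\beta$ with $\beta>1$, the factor $(-1)^\epsilon$ can never affect the order: $ind_{2^\alpha}(a)=2^\beta$ holds iff $ind_{2^\alpha}(5^k)=2^\beta$, regardless of $\epsilon$. Hence $a\mapsto -a$ is an involution on the set of elements of index $2^\beta$, and it is fixed-point-free because $a\equiv-a\pmod{2^\alpha}$ would force $2a\equiv 0$, impossible for a unit. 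The elements therefore pair up as $\{a,-a\}$, each pair contributing $a+(-a)=0$, so the sum vanishes mod $2^\alpha$.

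For part (5), fix a primitive root $g$ modulo $p^\alpha$ (it exists since $p$ is odd), so $U_{p^\alpha}$ is cyclic of order $\phi(p^\alpha)=p^{\alpha-1}(p-1)$; by the setup $\delta\mid\phi(p^\alpha)$. Write $S(\delta)$ for the desired sum and $T(e)=\sum_{a^e\equiv 1}a$ for the sum over the unique subgroup of order $e$. Grouping that subgroup by exact order gives $T(\delta)=\sum_{e\mid\delta}S(e)$, so Möbius inversion yields $S(\delta)=\sum_{e\mid\delta}\mu(\delta/e)T(e)$. The crux is evaluating $T(e)$. If $e$ is \emph{not} a power of $p$, a generator $h$ of the order-$e$ subgroup satisfies $h\not\equiv 1\pmod p$ (elements $\equiv 1\pmod p$ have $p$-power order), so $h-1\in U_{p^\alpha}$; multiplying $T(e)$ by $h$ permutes the subgroup, giving $(h-1)T(e)\equiv 0$ and hence $T(e)\equiv 0\pmod{p^\alpha}$. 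If $e=p^t$ with $0\le t\le\alpha-1$, the order-$p^t$ subgroup is exactly $\{1+p^{\alpha-t}m:0\le m<p^t\}$, and summing this arithmetic progression gives $T(p^t)=p^t+\tfrac{p^\alpha(p^t-1)}{2}\equiv p^t\pmod{p^\alpha}$, the quadratic term carrying a factor $p^\alpha$ since $p$ is odd.

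Finally I would assemble the Möbius sum. Setting $t_0=ord_p(\delta)$ and $q=\delta/p^{t_0}$ (the prime-to-$p$ part of $\delta$), only the divisors $e=p^a$ survive, so $S(\delta)\equiv\sum_{a=0}^{t_0}\mu(p^{t_0-a}q)\,p^a\pmod{p^\alpha}$. Since $\mu(p^j)=0$ for $j\ge 2$, only $a=t_0$ and $a=t_0-1$ contribute, collapsing the sum to $\mu(q)\bigl(p^{t_0}-p^{t_0-1}\bigr)=\mu(q)\phi(p^{t_0})$. To match the stated form, note that $\delta\mid\phi(p^\alpha)=p^{\alpha-1}(p-1)$ forces $q\mid p-1$, whence $(\delta,p-1)=q$ and $\mu(q)=\mu((\delta,p-1))$, giving $S(\delta)\equiv\mu((\delta,p-1))\phi(p^{ord_p(\delta)})$. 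The main obstacle is the \emph{exact} evaluation of the $p$-power subgroup sums $T(p^t)$ together with the careful bookkeeping that makes every non-$p$-power divisor drop out of the Möbius sum; once those are secured, the collapse via $\mu(p^j)=0$ and the identification $(\delta,p-1)=q$ are routine.
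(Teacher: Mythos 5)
Your proposal is correct, and the overall skeleton of part (5) matches the paper's: both M\"obius-invert the sum over elements of exact order $\delta$ into sums over the subgroups of order $e\mid\delta$, both kill the non-$p$-power subgroup sums by the same unit argument (a generator $h$ with $h\not\equiv1\pmod p$ makes $h-1$ invertible, forcing the subgroup sum to vanish), and both reduce to the two surviving divisors via $\mu(p^j)=0$ for $j\ge2$. Where you genuinely diverge is the evaluation of the $p$-power subgroup sums: the paper fixes an element $a$ of order $\delta$, writes the inner sums as geometric series $a^d\frac{a^{\delta}-1}{a^d-1}$, and evaluates the surviving ones by a binomial expansion of $\big((g^{\phi(m)d/\delta}-1)+1\big)^{\delta/d}$, whereas you identify the order-$p^t$ subgroup explicitly as $\{1+p^{\alpha-t}m:0\le m<p^t\}$ and sum the arithmetic progression, using that $p$ is odd to absorb the quadratic term into $p^{\alpha}$. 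Your route is cleaner and avoids the somewhat delicate manipulation of $\frac{a^{\delta}-1}{a^d-1}$ modulo $p^{\alpha}$; the paper's version has the advantage of staying entirely inside the ``powers of a primitive root'' formalism it set up for the counting step. For part (4) the paper simply cites its Theorem \ref{th:2.3.2}(3) (the $4\mid\delta$ pairing $a\leftrightarrow -a$), which is the same fixed-point-free involution you construct directly from $U_{2^{\alpha}}=\langle-1\rangle\times\langle5\rangle$; your version is self-contained but proves nothing new. One tiny bookkeeping remark: in the final collapse of (5) you should note separately the case $ord_p(\delta)=0$, where only the divisor $e=1$ survives and the answer is $\mu((\delta,p-1))\phi(1)$, since the phrase ``$a=t_0$ and $a=t_0-1$ contribute'' presumes $t_0\ge1$; the paper makes the same case split explicitly.
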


For a product of two distinct odd primes, we have

\begin{theorem}\label{th:2.3.4}
Let $p$ and $q$ be two distinct odd primes. Then
$$\sum_{\substack{a\in U_{pq}\\ind_{pq}(a)=\delta}}a\equiv \mu((\delta,p-1))I((\frac{\delta}{(\delta,p-1)},p-1))\frac{\phi(\delta)}{\phi((\delta,p-1))}\pmod p.$$
\end{theorem}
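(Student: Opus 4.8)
The plan is to reduce everything through the Chinese Remainder Theorem. Since $p$ and $q$ are distinct odd primes, CRT gives a group isomorphism $U_{pq}\cong U_p\times U_q$ sending $a$ to $(a\bmod p,\ a\bmod q)$, under which $ind_{pq}(a)=\mathrm{lcm}\big(ind_p(a\bmod p),\,ind_q(a\bmod q)\big)$. Because we only work modulo $p$, I would replace each summand $a$ by $a\bmod p$ and organize the sum according to the pair $(d_1,d_2)=\big(ind_p(a\bmod p),\,ind_q(a\bmod q)\big)$, which must satisfy $d_1\mid p-1$, $d_2\mid q-1$ and $\mathrm{lcm}(d_1,d_2)=\delta$. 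For a fixed admissible pair, as $a$ ranges over the corresponding elements the first coordinate runs over all elements of order $d_1$ in $U_p$, each occurring exactly once for every element of order $d_2$ in $U_q$; hence the contribution of that pair is $\big(\#\{b\in U_q:ind_q(b)=d_2\}\big)\cdot\big(\sum_{ind_p(c)=d_1}c\big)$.

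Next I would invoke the two ingredients already available. Since $U_q$ is cyclic of order $q-1$ and $d_2\mid q-1$, the number of elements of order $d_2$ is $\phi(d_2)$. For the inner sum over $U_p$, Theorem \ref{th:2.3.3}(5) specialized to $\alpha=1$ (where $(\delta,p-1)=d_1$ and $ord_p(d_1)=0$, so the $\phi$-factor is $1$) gives $\sum_{ind_p(c)=d_1}c\equiv\mu(d_1)\pmod p$. Combining, the whole sum becomes, modulo $p$,
$$\sum_{\substack{a\in U_{pq}\\ ind_{pq}(a)=\delta}}a\equiv F(\delta):=\sum_{\substack{d_1\mid p-1,\ d_2\mid q-1\\ \mathrm{lcm}(d_1,d_2)=\delta}}\mu(d_1)\,\phi(d_2)\pmod p.$$
It then remains to show that $F(\delta)$ equals the claimed closed form, where $I$ denotes the indicator that equals $1$ when its argument is $1$ and $0$ otherwise.

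The heart of the argument is the evaluation of $F(\delta)$, which I would carry out by multiplicativity. Writing $v_r$ for the $r$-adic valuation, the conditions $d_1\mid p-1$, $d_2\mid q-1$, $\mathrm{lcm}(d_1,d_2)=\delta$ together with the weights $\mu(d_1)$ and $\phi(d_2)$ all factor over the primes $r\mid\delta$, so $F(\delta)=\prod_{r\mid\delta}F_r$ with
$$F_r=\sum_{\substack{0\le a\le\min(e,P),\ 0\le b\le\min(e,Q)\\ \max(a,b)=e}}\mu(r^{a})\,\phi(r^{b}),$$
where $e=v_r(\delta)$, $P=v_r(p-1)$, $Q=v_r(q-1)$, and $e\le\max(P,Q)$ since $\delta\mid\lambda(pq)$. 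Using $\mu(r^a)=0$ for $a\ge2$ keeps only $a\in\{0,1\}$, and a short case analysis on the relative sizes of $e$, $P$, $Q$ evaluates each factor: $F_r=\phi(r^e)$ when $r\nmid p-1$, $F_r=\mu(r^e)$ when $r\mid p-1$ and $e\le P$, and $F_r=0$ when $r\mid p-1$ and $e>P$. Comparing these termwise with the local factors of $\mu((\delta,p-1))\,I\big((\delta/(\delta,p-1),p-1)\big)\,\phi(\delta)/\phi((\delta,p-1))$ shows the two products agree (in fact as an identity of integers), which yields the stated congruence.

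The main obstacle is precisely this local case analysis. One must track carefully when the maximum in the $\mathrm{lcm}$ condition is forced onto the $q$-coordinate — so that $b=e$ and the $\phi(r^e)$ survives — versus when it may sit on the $p$-coordinate, where the restriction to squarefree $d_1$ (through $\mu$) kicks in; and one must verify that the vanishing cases line up exactly with the global indicator $I$ switching off, i.e. that $I\big((\delta/(\delta,p-1),p-1)\big)=0$ reproduces exactly the primes $r\mid p-1$ with $e>P$ that kill a factor.
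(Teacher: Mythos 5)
Your argument is correct. Its first half --- using the Chinese Remainder Theorem to reduce the sum modulo $p$ to $\sum\mu(d_1)\phi(d_2)$ over pairs with $d_1\mid p-1$, $d_2\mid q-1$, $[d_1,d_2]=\delta$, by combining Theorem \ref{th:2.3.3}(5) at $\alpha=1$ with the count $\phi(d_2)$ of elements of order $d_2$ in the cyclic group $U_q$ --- is exactly the paper's reduction. You diverge only in how you evaluate this sum (the paper's $\mu\chi_{p-1}\circ\phi\chi_{q-1}(\delta)$): you exploit that the lcm convolution of multiplicative functions is multiplicative and compute an Euler factor at each prime $r\mid\delta$ via a case analysis on $v_r(\delta)$ versus $v_r(p-1)$, whereas the paper argues globally, disposing of the case $(\delta/(\delta,p-1),p-1)>1$ by pairing each $\delta_1$ with $r\delta_1$ so that $\mu(\delta_1)+\mu(r\delta_1)=0$, and otherwise pulling out the factor $\phi(\delta/(\delta,p-1))$ and collapsing the remaining convolution with the identity $f\circ\mu=f(1)\mu$. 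Your three local values $\phi(r^e)$, $\mu(r^e)$ and $0$ do match the local factors of the stated closed form in the corresponding cases, and the inequality $e\le\max(v_r(p-1),v_r(q-1))$ you need is indeed guaranteed by $\delta\mid\lambda(pq)$, so the identity holds; your route is essentially the local-factor technique the paper itself deploys later in the proof of Theorem \ref{th:2.3.5}, so it generalizes more directly to several prime factors, while the paper's pairing argument gives a slightly shorter write-up for this two-prime case.
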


We end this section with the following congruence.

\begin{theorem}\label{th:2.3.5}

If $m=\prod_{i=1}^{k}p^{\alpha_i}$, then for any $p^{\alpha}\parallel m$,
\begin{align}\label{eq:th:2.3.5.2}
\sum_{\substack{a\in U_{m}\\ind_{m}(a)=\delta}}a
\equiv\mu((\delta,p-1))I((\frac{\delta}{(\delta,p-1)},p-1))F(\phi(p_1^{\alpha_1}),\cdots,\phi(p_k^{\alpha_k})&;\frac{\delta}{(\delta,p-1)})\\\nonumber&\pmod{p^{\alpha}}.
\end{align}
where $F(a_1,\cdots,a_k;n)=\prod_{p^{\alpha}\parallel n}\prod_{ord_p(a_i)<\alpha}p^{ord_{p}(a_i)}\prod_{ord_p(a_i)\geq\alpha}\phi(p^{\alpha})$.
\end{theorem}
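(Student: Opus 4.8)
The plan is to reduce everything to the prime-power building blocks of Theorem \ref{th:2.3.3} by separating the factors of $m$ with the Chinese Remainder Theorem. Write $m=\prod_{i=1}^{k}p_i^{\alpha_i}$ and fix the distinguished factor $p^{\alpha}=p_j^{\alpha_j}$. Under $U_m\cong\prod_{i}U_{p_i^{\alpha_i}}$ an element $a$ corresponds to a tuple $(a_1,\dots,a_k)$, its index is $ind_m(a)=\mathrm{lcm}_i\, ind_{p_i^{\alpha_i}}(a_i)$, and reduction modulo $p^{\alpha}$ is precisely projection onto the $j$-th coordinate. Hence $\sum_a a\equiv\sum_a a_j\pmod{p^{\alpha}}$, and I would partition the index set according to the tuple of local orders $d_i=ind_{p_i^{\alpha_i}}(a_i)$, subject to the single constraint $\mathrm{lcm}(d_1,\dots,d_k)=\delta$.

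Next I would factor the sum. With the $d_i$ held fixed, each coordinate other than $j$ contributes the number of elements of the prescribed order, while the $j$-th coordinate contributes $\sum_{ind(a_j)=d_j}a_j$. For odd $p_i$ the count of elements of order $d_i$ in the cyclic group $U_{p_i^{\alpha_i}}$ is $\phi(d_i)$, the counts for any even coordinate come from the elementary structure of $U_{2^{\alpha_i}}$, and the distinguished sum is supplied by Theorem \ref{th:2.3.3} (part (5) when $p$ is odd, parts (1)--(4) when $p=2$). This turns the congruence into the purely arithmetic sum
$$\sum_{\substack{d_i\mid\lambda(p_i^{\alpha_i})\,(1\le i\le k)\\ \mathrm{lcm}(d_1,\dots,d_k)=\delta}}\Big(\sum_{ind_{p^{\alpha}}(a_j)=d_j}a_j\Big)\prod_{i\ne j}\#\{a_i\in U_{p_i^{\alpha_i}}:ind_{p_i^{\alpha_i}}(a_i)=d_i\}\pmod{p^{\alpha}}.$$

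The decisive step is to evaluate this, and here I would exploit that $\mathrm{lcm}(d_i)=\delta$ decouples prime by prime: for each prime $\ell$ it reads $\max_i ord_\ell(d_i)=ord_\ell(\delta)$, and since the weights $\mu(\cdot)$ and $\phi(\cdot)$ factor through $\ell$-parts, the whole sum becomes a product of local sums over $\ell\mid\delta$. Each local sum I would compute by inclusion--exclusion on the constraint $\max_i ord_\ell(d_i)=ord_\ell(\delta)$ together with the telescoping identity $\sum_{t'=0}^{t}\phi(\ell^{t'})=\ell^{t}$. Primes $\ell\mid p-1$ interact with the Möbius weight from Theorem \ref{th:2.3.3}(5): there the distinguished coordinate carries weight $\mu(\ell^{ord_\ell(d_j)})$, and summing it against the telescoped counts in the other coordinates gives $-1$ when $ord_\ell(\delta)=1$ (producing $\mu((\delta,p-1))$) and $0$ when $ord_\ell(\delta)\ge 2$ (a vanishing recorded jointly by $\mu((\delta,p-1))$ and the indicator $I((\delta/(\delta,p-1),p-1))$). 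The remaining primes, including $p$ itself where the Möbius weight is trivial, assemble into $F$.

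I expect this last evaluation to be the main obstacle, for two reasons. First, throughout the prime-by-prime analysis one must keep the distinguished coordinate $j$ — which alone carries the $\mu$-weight and the factor $\phi(p^{ord_p(d_j)})$ — cleanly separated from the counting coordinates, and check that its $p$-part interacts correctly with the $\max$-constraint. Second, at each prime the local sum is a difference of products $\prod_{i=1}^{k}\ell^{\min(ord_\ell(\delta),\,ord_\ell(\phi(p_i^{\alpha_i})))}-\prod_{i=1}^{k}\ell^{\min(ord_\ell(\delta)-1,\,ord_\ell(\phi(p_i^{\alpha_i})))}$, and matching this telescoped difference against the stated closed form of $F$ is the genuine crux of the argument; I would verify it carefully, since this is exactly where the combinatorics of several coordinates sharing the same prime in their orders must be reconciled. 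Once the local evaluations are secured, multiplying them over all primes recovers the claimed congruence.
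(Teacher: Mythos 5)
Your plan follows the same route as the paper: the CRT/projection argument turns the sum into the lcm--convolution $\bigl(\mu\chi_{p-1}\circ\phi\chi_{\phi(p_1^{\alpha_1})}\circ\cdots\bigr)(\delta)$ (the paper's displayed congruence \eqref{final}), and your prime-by-prime evaluation via the telescoping $\sum_{t'\le t}\phi(\ell^{t'})=\ell^{t}$ is exactly the paper's use of Lemma \ref{lem:lehmerconvolution4} followed by M\"obius inversion; your ``difference of products'' is literally the paper's $\prod_i(\phi\chi_{b_i}*u)(\ell^{t})-\prod_i(\phi\chi_{b_i}*u)(\ell^{t-1})$. Up to the final step the two arguments coincide.

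But the step you defer --- matching that telescoped difference against the closed form $F$ --- is precisely where the argument breaks, and it cannot be repaired as stated. Put $t=ord_\ell(\delta)$, $T=\{i:ord_\ell(b_i)\ge t\}$ and let $S$ be its complement. Your (correct) local value is $\prod_{i\in S}\ell^{ord_\ell(b_i)}\cdot\bigl(\ell^{t|T|}-\ell^{(t-1)|T|}\bigr)$, whereas the local factor of $F$ is $\prod_{i\in S}\ell^{ord_\ell(b_i)}\cdot(\ell^{t}-\ell^{t-1})^{|T|}$; these agree only when $|T|\le 1$, since $\ell^{|T|}-1\ne(\ell-1)^{|T|}$ for $|T|\ge 2$. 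Concretely, take $m=3\cdot 7\cdot 13$, $p=3$, $\delta=3$: the $8$ elements of $U_{273}$ of index $3$ are all $\equiv 1\pmod 3$, so the left side of \eqref{eq:th:2.3.5.2} is $8\equiv 2\pmod 3$, while the right side is $\mu(1)\,I(1)\,F(2,6,12;3)=\phi(3)^{2}=4\equiv 1\pmod 3$. So the verification you flag as the crux genuinely fails; the paper's own proof commits exactly this erroneous identification in its last display, and carrying your plan out honestly produces a corrected closed form with $\ell^{t|T|}-\ell^{(t-1)|T|}$ in place of $\phi(\ell^{t})^{|T|}$ rather than the theorem's $F$.
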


Though this result seems complex, it shows that there is no need to compute the sum of integers with same index modulo $m$ by finding each of them.

\section{Preliminaries}

The following two lemmas are well known facts in the study of primitive roots. (See \cite{apostol}.)

\begin{lemma}\label{apostolind1}
Let $a$ belong to $U_{m}$. Then for any positive integer $k$,
$$ind_m(a^k)=\frac{ind_{m}(a)}{(k,ind_{m}(a))}.$$
\end{lemma}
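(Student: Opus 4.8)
The plan is to prove the formula by the standard "order divides exponent" technique, treating $ind_m$ as the multiplicative order. Write $d = ind_m(a)$, $g = (k,d)$, and $e = ind_m(a^k)$; the goal is to show $e = d/g$. The single tool I would establish first is the characterization that for any nonnegative integer $n$ one has $a^n \equiv 1 \pmod m$ if and only if $d \mid n$. This follows from the division algorithm: writing $n = qd + r$ with $0 \le r < d$ gives $a^n \equiv a^r \pmod m$, so $a^n \equiv 1$ forces $a^r \equiv 1$, and the minimality of $d$ as the least positive exponent with this property forces $r = 0$; the converse is immediate since $a^d \equiv 1$.

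With this characterization in hand I would prove the two divisibilities $e \mid d/g$ and $d/g \mid e$ separately. For the first, I compute $(a^k)^{d/g} = (a^d)^{k/g}$, which is legitimate because $g \mid k$ makes $k/g$ a nonnegative integer; since $a^d \equiv 1 \pmod m$ this quantity is $\equiv 1$, and applying the characterization to the element $a^k$ (whose order is $e$) yields $e \mid d/g$.

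For the reverse divisibility, from $(a^k)^e \equiv 1 \pmod m$, that is $a^{ke} \equiv 1$, the characterization gives $d \mid ke$. Writing $d = g d'$ and $k = g k'$ with $(d',k') = 1$, this becomes $g d' \mid g k' e$, hence $d' \mid k' e$, and the coprimality $(d',k') = 1$ forces $d' \mid e$. Since $d' = d/g$, this is exactly $d/g \mid e$. Combining the two divisibilities, both quantities being positive, gives $e = d/g$, as claimed.

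The computation is elementary throughout, so I do not expect a genuine obstacle; the only point that needs care is the coprimality step, where one must reduce the relation $d \mid ke$ by the common factor $g = (k,d)$ so that the quotients $d' = d/g$ and $k' = k/g$ are genuinely coprime, which is precisely what permits cancelling $k'$ to conclude $d' \mid e$. Everything else is bookkeeping with the order-divides-exponent characterization.
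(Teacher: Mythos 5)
Your proof is correct: the order-divides-exponent characterization, the two divisibilities $e \mid d/g$ and $d/g \mid e$, and the cancellation step using $(d/g, k/g)=1$ are all sound. The paper gives no proof of this lemma---it cites it as a well-known fact from Apostol---and your argument is precisely the standard one found there, so nothing further is needed.
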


\begin{lemma}\label{apostolind2}
Let $m$, $n$ and $a$ be positive integers, relatively prime in pairs. Then $ind_{mn}(a)=[ind_{m}(a),ind_{n}(a)]$. In particular, $\lambda(mn)=[\lambda(m),\lambda(n)],$ where $[m,n]$ represents the least common multiple of $n$ and $m$.
\end{lemma}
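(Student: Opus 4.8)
The plan is to pass to the Chinese Remainder decomposition $U_m\cong\prod_{i=1}^{k}U_{p_i^{\alpha_i}}$ and reduce everything to the single prime power sums already computed in Theorem \ref{th:2.3.3}. Write the distinguished factor as $p^{\alpha}=p_j^{\alpha_j}$, and for $a\in U_m$ let $a_i$ be its image in $U_{p_i^{\alpha_i}}$ and $d_i=ind_{p_i^{\alpha_i}}(a_i)$. By Lemma \ref{apostolind2} the condition $ind_m(a)=\delta$ is exactly $[d_1,\dots,d_k]=\delta$, a constraint involving only the component indices. Since $a\equiv a_j\pmod{p^{\alpha}}$, reduction modulo $p^{\alpha}$ kills every coordinate except the $j$-th, so grouping the tuples by their index vector $(d_1,\dots,d_k)$ gives
\begin{align*}
\sum_{\substack{a\in U_m\\ ind_m(a)=\delta}}a
&\equiv\sum_{\substack{(d_1,\dots,d_k)\\ [d_1,\dots,d_k]=\delta}}
\Big(\sum_{\substack{b\in U_{p^{\alpha}}\\ ind_{p^{\alpha}}(b)=d_j}}b\Big)
\prod_{i\neq j}\#\{\,c\in U_{p_i^{\alpha_i}}:ind_{p_i^{\alpha_i}}(c)=d_i\,\}\pmod{p^{\alpha}}.
\end{align*}
Each $U_{p_i^{\alpha_i}}$ is cyclic, so the cardinality above equals $\phi(d_i)$, and Theorem \ref{th:2.3.3}(5) replaces the inner $j$-sum by $\mu((d_j,p-1))\phi(p^{ord_p(d_j)})$ modulo $p^{\alpha}$. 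Thus the problem becomes the evaluation, modulo $p^{\alpha}$, of the integer $T=\sum_{[d_1,\dots,d_k]=\delta}\mu((d_j,p-1))\phi(p^{ord_p(d_j)})\prod_{i\neq j}\phi(d_i)$.

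Second, I would factor $T$ over primes. Each $d_i\mid\phi(p_i^{\alpha_i})$, so $d_i=\prod_q q^{f_{i,q}}$ with $f_{i,q}\le c_{i,q}:=ord_q(\phi(p_i^{\alpha_i}))$; here $c_{j,q}=\alpha-1$ for $q=p$ and $c_{j,q}=ord_q(p-1)$ otherwise. Since $\mu$, $\phi$, and $\phi(p^{ord_p(\cdot)})$ are multiplicative in the prime factorization of their arguments, the summand of $T$ splits as a product over primes $q$ of a local weight depending only on the exponent vector $(f_{1,q},\dots,f_{k,q})$. Crucially, the single constraint $[d_1,\dots,d_k]=\delta$ is equivalent to the family of independent conditions $\max_i f_{i,q}=ord_q(\delta)$, one per prime $q$. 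Hence $T=\prod_q T_q$, where $T_q$ is the sum of the local weight over all vectors with $0\le f_{i,q}\le c_{i,q}$ and $\max_i f_{i,q}=ord_q(\delta)$; divisibility $\delta\mid\lambda(m)$ (again Lemma \ref{apostolind2}) guarantees $ord_q(\delta)\le\max_i c_{i,q}$, so each $T_q$ is a nonempty sum.

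Third, I would evaluate $T_q$ by inclusion--exclusion, writing $\max_i f_{i,q}=g$ as the difference of $\max_i f_{i,q}\le g$ and $\max_i f_{i,q}\le g-1$ and using $\sum_{f=0}^{M}\phi(q^{f})=q^{M}$ to collapse the resulting products. The primes then fall into three classes. For $q\nmid p(p-1)$ the $j$-th factor is forced trivial ($f_{j,q}=0$) and $T_q$ is a pure product of $\phi$-counts, which should reproduce the factor of $F$ attached to $q$. For $q\mid p-1$ the $j$-th factor carries the Möbius weight $\mu(q^{f_{j,q}})$, nonzero only for $f_{j,q}\in\{0,1\}$; the two surviving terms telescope, yielding $-1$ when $ord_q(\delta)=1$, the value $1$ when $q\nmid\delta$, and $0$ as soon as $ord_q(\delta)\ge2$. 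Assembling these across $q\mid p-1$ should produce precisely $\mu((\delta,p-1))$ together with $I((\tfrac{\delta}{(\delta,p-1)},p-1))$, the latter recording the collapse forced whenever some $q\mid p-1$ has $ord_q(\delta)>ord_q(p-1)$. Finally $q=p$ contributes the remaining $\phi(p^{ord_p(\delta)})$-type factor of $F$ after reduction modulo $p^{\alpha}$.

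The main obstacle I anticipate lies in this last step: matching each local sum $T_q$ to the corresponding factor of $F$ as written, verifying that the $\phi$-count factors for $q\nmid p(p-1)$ combine into $F$ exactly, and confirming that the interplay of $\mu((\delta,p-1))$ and $I$ captures every configuration in which the sum collapses to $0$. The modular reduction at $q=p$, where the factors $\phi(p^{f})$ carry genuine powers of $p$, will require particular care. Checking the degenerate cases $p=2$ and $\alpha=1$, and confirming consistency with Theorem \ref{th:2.3.3}(5) (the case $k=1$) and Theorem \ref{th:2.3.4} (the case $m=pq$), will serve as the natural sanity checks.
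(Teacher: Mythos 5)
Your proposal does not engage with the statement it is supposed to prove. The statement is Lemma \ref{apostolind2}, the elementary fact that for pairwise coprime $m$, $n$, $a$ one has $ind_{mn}(a)=[ind_{m}(a),ind_{n}(a)]$, with the consequence $\lambda(mn)=[\lambda(m),\lambda(n)]$. What you have written is instead an outline of Theorem \ref{th:2.3.5} --- the evaluation of $\sum_{a\in U_m,\,ind_m(a)=\delta}a$ modulo $p^{\alpha}$ via the Chinese remainder decomposition and Theorem \ref{th:2.3.3}(5). Worse for present purposes, your argument explicitly \emph{invokes} Lemma \ref{apostolind2} twice: once to translate the condition $ind_m(a)=\delta$ into $[d_1,\dots,d_k]=\delta$, and once to justify $\delta\mid\lambda(m)$. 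An argument that assumes the statement under proof is circular, so nothing in your text bears on the lemma itself. (The paper, for its part, offers no proof either: it records the lemma as a well-known fact with a citation to Apostol.)

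The missing proof is short, which is presumably why the paper omits it. Write $d=ind_{mn}(a)$, $d_1=ind_{m}(a)$, $d_2=ind_{n}(a)$. From $a^{d}\equiv1\pmod{mn}$ one gets $a^{d}\equiv1$ modulo $m$ and modulo $n$, so $d_1\mid d$ and $d_2\mid d$, hence $[d_1,d_2]\mid d$. Conversely $a^{[d_1,d_2]}\equiv1$ modulo $m$ and modulo $n$, and since $(m,n)=1$ this forces $a^{[d_1,d_2]}\equiv1\pmod{mn}$, so $d\mid[d_1,d_2]$ and therefore $d=[d_1,d_2]$. For the Carmichael statement: $a^{\lambda(mn)}\equiv1\pmod{mn}$ for all $a\in U_{mn}$ implies, via the surjection $U_{mn}\to U_m$ furnished by the Chinese remainder theorem, that $\lambda(m)\mid\lambda(mn)$, and likewise $\lambda(n)\mid\lambda(mn)$, so $[\lambda(m),\lambda(n)]\mid\lambda(mn)$; conversely $a^{[\lambda(m),\lambda(n)]}\equiv1$ modulo $m$ and modulo $n$ for every $a\in U_{mn}$, hence modulo $mn$, giving $\lambda(mn)\mid[\lambda(m),\lambda(n)]$. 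If you want to salvage your text, it belongs as a proof attempt for Theorem \ref{th:2.3.5}, where its grouping-by-index-vector step and the local factorization over primes $q$ would then need to be checked against the paper's convolution identity \eqref{final} and the closed-form evaluation of $\mu\chi_{p-1}\circ\phi\chi_{\phi(p_1^{\alpha_1})}\circ\cdots\circ\phi\chi_{\phi(p_k^{\alpha_k})}$.
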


The Dirichlet convolution defined on the set of arithmetical functions: $\mathcal{A}:=\{f:\mathbb{Z}^{+}\to\mathbb{C}\}$ is very important in number theory, especially in analytic number theory. Two arithmetical functions $f$ and $g$ are combined by the convolution as follows
$$(f*g)(n)=\sum_{ab=n}f(a)g(b).$$
All the functions in $\mathcal{A}$ that are invertible form a group $\mathcal{A}^{*}$, which contains an essential subgroup called the set of the multiplicative functions: $\mathcal{M}:=\{f\in\mathcal{A}:f\not\equiv0, f(mn)=f(m)f(n)\text{ for any two relatively prime positive integers }m\text{ and }n\}.$ An interesting result about Dirichlet convolution is the M\"{o}bius inversion formula showing that $f=g*u$ if and only if $f*\mu=g$ where $u\equiv 1$ and $\mu$ is the M\"{o}bius function.

Besides Dirichlet convolution, many other arithmetical convolutions also have long been studied by mathematicians. (See \cite{Subbarao, Lehmer1}.) To prove the results in this paper, we introduce one of them. The $lcm$ convolution of $f$ and $g$ is defined as follows
$$f\circ g=\sum_{[a,b]=n}f(a)g(b).$$
It is obvious that $\mathcal{M}$ is closed under this binary operation. Lehmer \cite{Lehmer1} proved that
\begin{lemma}\label{lem:lehmerconvolution2}
Let $$M(1)=1,\qquad M(n):=\prod_{p^{\alpha}\parallel n}\big((\alpha+1)^{-1}-{\alpha}^{-1}\big).$$
Then $M\circ u=I$ where $I(n)=\lfloor1/n\rfloor$. Thus $f=g\circ u$ if and only if $f\circ M=g.$
\end{lemma}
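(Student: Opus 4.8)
The plan is to establish the two assertions in turn: first the core identity $M\circ u = I$, and then to deduce the inversion statement formally from it. Before computing anything I would record the algebraic properties of the $lcm$ convolution that drive the argument. Commutativity is immediate from the symmetry of the defining sum. Associativity follows by regrouping a triple sum: both $(f\circ g)\circ h$ and $f\circ(g\circ h)$ expand to $\sum_{[a,b,c]=n}f(a)g(b)h(c)$, using $[[a,b],c]=[a,b,c]=[a,[b,c]]$. And $I(n)=\lfloor 1/n\rfloor$ is the two-sided identity, since $I(b)\neq 0$ only when $b=1$, so $\sum_{[a,b]=n}f(a)I(b)$ collapses to the single term $f(n)$. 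I would then invoke the fact—stated in the excerpt—that $\mathcal{M}$ is closed under $\circ$. Because $M$ and $u$ are both multiplicative, $M\circ u$ is multiplicative, and since $I$ is multiplicative as well, it suffices to verify $M\circ u = I$ on prime powers.

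The heart of the proof is this prime-power computation. Since $u\equiv 1$, I would write $(M\circ u)(p^{\alpha})=\sum_{[p^i,p^j]=p^{\alpha}}M(p^i)$, where the pairs $(i,j)$ range over $0\le i,j\le \alpha$ with $\max(i,j)=\alpha$. I would split according to whether $i<\alpha$, which forces $j=\alpha$ (one choice of $j$), or $i=\alpha$, which allows $j=0,\dots,\alpha$ (that is, $\alpha+1$ choices). This yields
$$(M\circ u)(p^{\alpha})=\sum_{i=0}^{\alpha-1}M(p^i)+(\alpha+1)M(p^{\alpha}).$$
Using $M(1)=1$ and $M(p^i)=(i+1)^{-1}-i^{-1}$ for $i\ge 1$, the first sum telescopes to $1/\alpha$, while $(\alpha+1)M(p^{\alpha})=1-(\alpha+1)/\alpha=-1/\alpha$, so the total vanishes for every $\alpha\ge 1$. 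Together with the trivial case $n=1$, where $(M\circ u)(1)=M(1)u(1)=1=I(1)$, this establishes $M\circ u=I$.

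Finally I would derive the inversion formula purely formally, reading $M\circ u=I$ as the statement that $M$ is the $\circ$-inverse of $u$. If $f=g\circ u$, then associativity and commutativity give $f\circ M=(g\circ u)\circ M=g\circ(u\circ M)=g\circ I=g$. Conversely, if $f\circ M=g$, then $g\circ u=(f\circ M)\circ u=f\circ(M\circ u)=f\circ I=f$. I expect the only genuinely delicate point to be the bookkeeping in the prime-power count—correctly separating the regimes $i<\alpha$ and $i=\alpha$ so that the telescoping produces the exact cancellation $1/\alpha-1/\alpha=0$—while the algebraic preliminaries and the formal inversion are routine once the framework is in place.
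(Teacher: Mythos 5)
Your proof is correct, and a comparison with the paper's own proof is moot here: the paper states this lemma as a quoted result of Lehmer \cite{Lehmer1} and supplies no proof, so your argument stands as a self-contained substitute. The details check out. The pairs $(p^i,p^j)$ with $\max(i,j)=\alpha$ do split into $\alpha$ pairs with $i<\alpha$, $j=\alpha$, and $\alpha+1$ pairs with $i=\alpha$, giving $(M\circ u)(p^{\alpha})=\sum_{i=0}^{\alpha-1}M(p^i)+(\alpha+1)M(p^{\alpha})=\frac{1}{\alpha}-\frac{1}{\alpha}=0=I(p^{\alpha})$; the reduction to prime powers is licensed by the closure of $\mathcal{M}$ under $\circ$ (asserted in the paper just before the lemma) together with your check $(M\circ u)(1)=1$; and the formal inversion needs only associativity, commutativity, and $I$ being the $\circ$-identity, all of which you verify rather than assume. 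For contrast, the derivation most consonant with the paper's own toolkit would instead go through Lemma \ref{lem:lehmerconvolution4}: $(M\circ u)*u=(M*u)(u*u)$, and since $\sum_{d\mid p^{\alpha}}M(d)$ telescopes to $(\alpha+1)^{-1}$ while $(u*u)(p^{\alpha})=\alpha+1$, the product is identically $1=(I*u)(n)$, whence $M\circ u=I$ by M\"{o}bius inversion --- this is precisely the criterion recorded as part (2) of the lemma that follows in the paper. That route trades your lattice-point bookkeeping on $\max(i,j)=\alpha$ for an appeal to Lemma \ref{lem:lehmerconvolution4}; yours is more elementary and self-contained, while the convolution-identity route is shorter once the paper's machinery is granted. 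Either way, there is no gap.
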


In another paper, Lehmer \cite{Lehmer2} gave the following formula
\begin{lemma}\label{lem:lehmerconvolution4}
$$(f*u)(g*u)=(f\circ g)*u$$
for any arithmetical functions $f$ and $g$.
\end{lemma}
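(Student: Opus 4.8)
The plan is to prove the identity pointwise: I fix a positive integer $n$ and show that the two arithmetical functions agree at $n$. Since $u\equiv1$, Dirichlet convolution with $u$ is just a divisor sum, so the first step is to expand each side into an explicit sum indexed by divisors. For the left-hand side I would write $(f*u)(n)=\sum_{d\mid n}f(d)$ and $(g*u)(n)=\sum_{e\mid n}g(e)$, so that
$$(f*u)(n)\,(g*u)(n)=\sum_{d\mid n}\ \sum_{e\mid n}f(d)g(e),$$
a sum over all ordered pairs $(d,e)$ of divisors of $n$. For the right-hand side I would unfold the two convolutions as
$$\big((f\circ g)*u\big)(n)=\sum_{k\mid n}(f\circ g)(k)=\sum_{k\mid n}\ \sum_{[a,b]=k}f(a)g(b),$$
a sum over all pairs $(a,b)$ whose least common multiple is some divisor $k$ of $n$.

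The crux is then a reindexing argument resting on one elementary equivalence: for positive integers $a$ and $b$, one has $a\mid n$ and $b\mid n$ if and only if $[a,b]\mid n$. The forward direction holds because $[a,b]$ divides every common multiple of $a$ and $b$, and $n$ is one such; the reverse holds because $a\mid[a,b]$ and $b\mid[a,b]$. Using this, the double sum on the right-hand side is exactly the sum of $f(a)g(b)$ over all pairs $(a,b)$ with $[a,b]\mid n$, which by the equivalence is the same index set as the pairs of divisors appearing on the left. Grouping the pairs $(a,b)$ with $a,b\mid n$ according to the value $k=[a,b]$ turns the left-hand sum into the right-hand double sum, establishing the identity.

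I expect no serious obstacle here, as the whole argument is a Fubini-type reorganization of a finite sum. The only point requiring genuine care is confirming that the grouping by lcm is an exact partition of the index set: every pair $(a,b)$ of divisors of $n$ has a well-defined least common multiple $k=[a,b]$ that again divides $n$, and conversely every divisor $k\mid n$ together with a factorization $[a,b]=k$ yields divisors $a,b\mid n$, so that each term $f(a)g(b)$ is counted exactly once on both sides. Once this bijection between the two index sets is made explicit, the summand $f(a)g(b)$ is visibly preserved and the equality of the two sides follows at once, completing the proof for arbitrary $n$ and hence the identity of arithmetical functions.
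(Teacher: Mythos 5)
Your proof is correct and complete. The key equivalence you isolate --- that $a\mid n$ and $b\mid n$ if and only if $[a,b]\mid n$ --- is exactly what makes the reindexing work, and your grouping of divisor pairs $(a,b)$ by the value $k=[a,b]$ is an exact partition, so each term $f(a)g(b)$ is counted once on both sides. Note that the paper itself gives no proof of this lemma, merely citing Lehmer's 1931 paper \emph{Arithmetic of double series}; your double-sum rearrangement is the standard (and essentially the original) argument, so you have in effect supplied the proof the paper omits, with no gaps.
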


By the definition of $lcm$ convolution, Lemma \ref{lem:lehmerconvolution2} and Lemma \ref{lem:lehmerconvolution4}, one can easily obtain the following lemmas.

\begin{lemma}
(1) $f\circ\mu=f(1)\mu$ for any $f$ in $\mathcal{A}$.

(2) $f \circ g=I$ if and only if $\Big(\sum_{d\mid n}f(d)\Big)\Big(\sum_{d\mid n}g(d)\Big)=1$ for any positive integer $n$.

(3) $$(f\circ g)(p^{\alpha})=f(p^{\alpha})\sum_{i=0}^{\alpha-1}g(p^i)+g(p^{\alpha})\sum_{j=0}^{\alpha-1}f(p^j)+f(p^{\alpha})g(p^{\alpha})$$
where $p$ is a prime number.

(4) Let $f$ be a multiplicative function. Then $$\sum_{[a,b]=n}\mu(a)f(a)=\prod_{p^2\mid n}(1-f(p))\prod_{p\parallel n}(1-2f(p)).$$ In particular, if $n$ is a perfect square, then $\mu f*u=\mu f\circ u.$
\end{lemma}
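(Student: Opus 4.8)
The plan is to split the four parts into two groups: parts (1) and (2) will come from Lehmer's product formula (Lemma~\ref{lem:lehmerconvolution4}) combined with Möbius inversion, while parts (3) and (4) will be obtained by evaluating the $lcm$ convolution directly on prime powers. Throughout I write $(f*u)(n)=\sum_{d\mid n}f(d)$ and recall that the function $I(n)=\lfloor 1/n\rfloor$ of Lemma~\ref{lem:lehmerconvolution2} is the Dirichlet identity, so $I*h=h$ for every $h$ and $\mu*u=I$.

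For part (1), I would apply Lemma~\ref{lem:lehmerconvolution4} with $g=\mu$ to get $(f*u)(\mu*u)=(f\circ\mu)*u$. Since $\mu*u=I$ vanishes for $n>1$, the pointwise product $(f*u)\cdot I$ is supported only at $n=1$, where it equals $f(1)$; hence $(f\circ\mu)*u=f(1)I$, and convolving with $\mu$ (using $I*\mu=\mu$) yields $f\circ\mu=f(1)\mu$. For part (2), Lemma~\ref{lem:lehmerconvolution4} gives $(f*u)(g*u)=(f\circ g)*u$ identically, so $f\circ g=I$ holds if and only if $(f\circ g)*u=I*u=u$; the latter equation says precisely that $(f*u)(n)\,(g*u)(n)=1$ for every $n$, which is the asserted condition.

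Part (3) is a direct count: the pairs $(a,b)$ with $[a,b]=p^{\alpha}$ are exactly $(p^i,p^j)$ with $0\le i,j\le\alpha$ and $\max(i,j)=\alpha$, and splitting into the cases $i=\alpha>j$, $j=\alpha>i$, and $i=j=\alpha$ reproduces the three displayed terms. For part (4) I first note that $\mu f$ is multiplicative (a pointwise product of the multiplicative functions $\mu$ and $f$), $u$ is multiplicative, and $\mathcal{M}$ is closed under $\circ$, so $\mu f\circ u$ is multiplicative and it suffices to evaluate it on prime powers. Applying part (3) with $(\mu f,u)$ in place of $(f,g)$, together with $(\mu f)(1)=1$, $(\mu f)(p)=-f(p)$, $(\mu f)(p^k)=0$ for $k\ge2$, and $u\equiv1$, gives $(\mu f\circ u)(p)=1-2f(p)$ when $\alpha=1$ and $(\mu f\circ u)(p^{\alpha})=1-f(p)$ when $\alpha\ge2$; multiplying over the prime-power divisors of $n$ yields the stated product. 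For the ``in particular'' claim, a perfect square $n$ has every prime divisor occurring to exponent $\ge2$, so no factor $1-2f(p)$ appears and $(\mu f\circ u)(n)=\prod_{p\mid n}(1-f(p))$; since $\mu f$ vanishes on $p^k$ for $k\ge2$, the ordinary convolution also gives $(\mu f*u)(n)=\prod_{p\mid n}(1-f(p))$, so the two convolutions agree at such $n$.

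None of the four parts is a serious obstacle once the two Lehmer identities are available. The only points demanding care are in part (4): justifying the multiplicativity of $\mu f\circ u$ in order to reduce to prime powers, and carefully separating the $\alpha=1$ case from the $\alpha\ge2$ case, since it is exactly this split that produces the two distinct product factors $1-2f(p)$ and $1-f(p)$.
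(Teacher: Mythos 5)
Your proposal is correct, and it follows exactly the route the paper intends: the paper omits the proof, saying only that the lemma follows from the definition of the $lcm$ convolution together with Lemmas~\ref{lem:lehmerconvolution2} and~\ref{lem:lehmerconvolution4}, which is precisely how you argue (Lehmer's identity plus M\"obius inversion for (1)--(2), direct enumeration of pairs $(p^i,p^j)$ with $\max(i,j)=\alpha$ for (3), and multiplicativity plus the prime-power evaluation for (4)). All four verifications check out, including the case split $\alpha=1$ versus $\alpha\ge 2$ that produces the factors $1-2f(p)$ and $1-f(p)$.
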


\section{proofs of the theorems}

\begin{proof}[Proof of Theorem \ref{th:2.3.1}]
If $\delta=1$, then $a=1$. Thus \eqref{eq:2.3.1} is obvious.

If $\delta=2$, then $m>2$. Assume that $m=2^{\alpha}\prod_{i=1}^{k}p_{i}^{\beta_i}$ where $\{p_i\}$ is a set of prime numbers that is pairwise coprime. By the Chinese remainder theorem,
\begin{eqnarray*}
&\#&\{x\in U_m:x^2\equiv1\pmod m\}\\
=&\#&\{x\in U_{2^{\alpha}}:x^2\equiv1\pmod{2^{\alpha}}\}\prod_{i=1}^{k}\#\{x\in U_{p^{\beta_i}}:x^2\equiv1\pmod{p_{i}^{\beta_i}}\}.
\end{eqnarray*}
Hence,
\begin{equation*}
\#:=\#\{x\in U_m:x^2\equiv1\pmod m\}=\begin{cases}2^k,&\text{ if}\alpha=0\text{ or }1\\2^{k+1},&\text{ if}\ alpha=2\\2^{k+2},&\text{ if}\alpha\geq3.\end{cases}
\end{equation*}

Notice that if $a\not\equiv\pm1\pmod m$, then $ind_m(a)=2$ if and only if $ind_m(-a)=2$. Since $a$ is coprime to $m$, $a\not\equiv -a\pmod m$ if $m>2$. Besides, $ind_m(a)=2$ implies $-a^2\equiv-1\pmod m$. Therefore,
\begin{eqnarray*}
\prod_{\substack{a\in U_m\\ind_m(a)=2}}a&\equiv&(-1)^{\#/2}\pmod m\\
&\equiv&\begin{cases}-1\pmod m,&\text{ if}\alpha=0,1,k=1\text{ or }\alpha=2,k=0,\\1\pmod m,&\text{ otherwise.}\end{cases}
\end{eqnarray*}
The product is congruent to $-1$ modulo $m$ implies $m$ has a primitive root. Thus, \eqref{eq:2.3.1} holds for $\delta=2$.

For $\delta>2$, $ind_m(a)=\delta$ implies $ind_m(a^{-1})=\delta$ where $a^{-1}$ is the multiplicative inverse of $a$ in $U_m$. By the definition of $ind$, $a\not\equiv a^{-1}\pmod m$. Therefore, all integers with $ind$ $delta$ can be arranged in pairs with product $1$ modulo $m$. This proves the theorem.
\end{proof}

\begin{proof}[Proof of Theorem \ref{th:2.3.2}]
(1)is obvious.

By the proof of Theorem \ref{th:2.3.1}, if $a\not\equiv\pm1\pmod m$, then $ind_m(a)=2$ if and only if $ind_m(-a)=2$. Therefore, with the exception of $-1$, all integers with $ind$ $2$ can be arranged in pairs with sum $0$ modulo $m$, which proves (2).

If $4$ divides $\delta$, then $(-a)^{\delta}\equiv1\pmod m$ for $a$ such that $ind_m(a)=\delta$. Let $\delta'$ denote $ind_m(-a)$. If $\delta'<\delta$, then $(-1)^{\delta'}\equiv-1\pmod m$. Otherwise, $a^{\delta'}\equiv1\pmod m$, which contradicts the definition of $ind$. Thus, $\delta'$ is odd and $\delta'\mid\delta$. This implies $a^{2\delta'}\equiv-1\pmod m$. Moreover, $\delta\mid2\delta'$. Hence, $\delta=2\delta'$ which contradicts the assumption that $4$ divides $\delta$. Therefore, $ind_m(a)=\delta$ if and only if $ind_m(-a)=\delta$. For $m>4$, all integers with $ind$ $\delta$ can be arranged in pairs with sum $0$ modulo $m$, which proves (3).
\end{proof}

\begin{proof}[Proof of Theorem \ref{th:2.3.3}]
One can check the congruences in (1), (2) and (3) easily. (4) is a straightforward corollary to Theorem \ref{th:2.3.2}(3).

We now prove the result in (5).

Since $m=p^{\alpha}$, $m$ has a primitive root denoted by $g$. By Lemma \ref{apostolind1}, for any positive integer $d$ that divides $\phi(m)$, there exists integer $a$ such that $ind_m(a)=d$. For any integer $a$ such that $ind_m(a)=\delta$, if $(k,\delta)=1$, then $ind_m(a^k)=\delta$. Notice that $a^i\equiv a^j\pmod m$ if and only if $i\equiv j\pmod{\delta}$. Thus, there exists at least $\phi(\delta)$ integers with $ind$ $\delta$ modulo $m$. Since $n=\sum_{d\mid n}\phi(d)$, there exists exactly $\phi(\delta)$ integers with $ind$ $\delta$ modulo $m$.

Hence, for any integer $a$ such that $ind_m(a)=\delta$,

\begingroup
\allowdisplaybreaks
\begin{eqnarray*}
\sum_{\substack{a\in U_{p^{\alpha}}\\ind_{p^{\alpha}}(a)=\delta}}a&\equiv&\sum_{\substack{j=1\\(j,\delta)=1}}^{\delta}a^j\equiv \sum_{j=1}^{\delta}a^j\sum_{\substack{d\mid j\\d\mid \delta}}\mu(d)\\
&\equiv&\sum_{d\mid\delta}\mu(d)\sum_{\substack{j=1\\d\mid j}}^{\delta}a^j\equiv\sum_{d\mid\delta}\mu(d)\sum_{j=1}^{\delta/d}a^{jd}\\
&\equiv&\sum_{d\mid\delta}\mu(d)a^d\frac{a^{\delta}-1}{a^d-1}\equiv (a^{\delta}-1)\sum_{d\mid\delta}\mu(d)(1+\frac{1}{a^d-1})\\
&\equiv&(a^{\delta}-1)\sum_{d\mid\delta}\mu(d)+\sum_{d\mid\delta}\mu(d)\frac{a^{\delta}-1}{a^d-1}\\
&\equiv&\sum_{d\mid\delta}\mu(d)\frac{a^{\delta}-1}{a^d-1}\equiv\sum_{\substack{d\mid\delta\\a^d\equiv1\pmod p}}\mu(d)\frac{a^{\delta}-1}{a^d-1}\\
&\equiv&\sum_{\substack{d\mid\delta\\g^{\phi(m)d/\delta}\equiv1\pmod p}}\mu(d)\frac{g^{\phi(m)}-1}{g^{\phi(m)d/\delta}-1}\pmod m.
\end{eqnarray*}
\endgroup
Notice that the following four are equivalent
$$g^{\phi(m)d/\delta}\equiv1\pmod p,$$ $$\phi(m)d/\delta\equiv0\pmod{p-1},$$ $$(p-1)d/\delta\equiv0\pmod{p-1},$$ $$(\delta/d,p-1)=1.$$
Thus,
\begin{eqnarray*}
\sum_{\substack{a\in U_{p^{\alpha}}\\ind_{p^{\alpha}}(a)=\delta}}a&\equiv&\sum_{\substack{d\mid\delta\\(\delta/d,p-1)=1}}\mu(d)\sum_{j=1}^{\delta/d}\binom{\delta/d}{j}(g^{\phi(m)d/\delta}-1)^{j-1}\\
&\equiv&\sum_{d\mid p^{ord_p(\delta)}}\mu(d(\delta,p-1))\sum_{j=1}^{\delta/(d(\delta,p-1))}\binom{\delta/(d(\delta,p-1))}{j}(g^{\phi(m)d(\delta,p-1)/\delta}-1)^{j-1}\\
&\equiv&\mu((\delta,p-1))\sum_{d\mid p^{ord_p(\delta)}}\mu(d)\\
&&\times\sum_{j=1}^{\delta/(d(\delta,p-1))}\binom{\delta/(d(\delta,p-1))}{j}(g^{\phi(m)d(\delta,p-1)/\delta}-1)^{j-1}\pmod m.
\end{eqnarray*}
(5) holds apparently for $ord_p(\delta)=0$. For $r:=ord_p(\delta)>0$,
\begin{eqnarray*}
\sum_{\substack{a\in U_{p^{\alpha}}\\ind_{p^{\alpha}}(a)=\delta}}a&\equiv&\mu((\delta,p-1))\{\sum_{j=1}^{p^r}\binom{p^r}{j}(g^{\phi(p^{\alpha-r})}-1)^{j-1}-\sum_{j=1}^{p^{r-1}}\binom{p^{r-1}}{j}(g^{\phi(p^{\alpha-r+1})}-1)^{j-1}\}\\
&\equiv&\mu((\delta,p-1))(p^r-p^{r-1})\equiv\mu((\delta,p-1))\phi(p^r)\pmod m
\end{eqnarray*}
which completes the proof.
\end{proof}

\begin{proof}[Proof of Theorem \ref{th:2.3.4}]
Since $p$ and $q$ are two relatively prime prime numbers, there exists two integers $x$ and $y$ such that $px+qy=1$. Thus,
\begin{eqnarray*}
\sum_{\substack{a\in U_{pq}\\ind_{pq}(a)=\delta}}a&\equiv&\sum_{\substack{a=1\\ind_{p}(a)=\delta_1,ind_{q}(a)=\delta_2\\ [\delta_1,\delta_2]=\delta}}^{pq}a \equiv \sum_{\substack{1\leq i\leq p,1\leq j\leq q\\ind_{p}(i)=\delta_1,ind_{q}(j)=\delta_2\\ [\delta_1,\delta_2]=\delta}}(pxj+qyi)\\
&\equiv&\sum_{[\delta_1,\delta_2]=\delta}\sum_{\substack{i=1\\ind_{p}(i)=\delta_1}}^{p}\sum_{\substack{j=1\\ind_{q}(j)=\delta_2}}^{q}i\equiv\sum_{[\delta_1,\delta_2]=\delta}\sum_{\substack{i=1\\ind_{p}(i)=\delta_1}}^{p}i\sum_{\substack{j=1\\ind_{q}(i)=\delta_2}}^{p}1\\
&\equiv&\sum_{\substack{[\delta_1,\delta_2]=\delta\\ \delta_1\mid p-1,\delta_2\mid q-1}}\mu(\delta_1)\phi(\delta_2)\pmod p.
\end{eqnarray*}

Let $$\chi_{n}(a):=\begin{cases}1,&\text{ if }a\mid n,\\0,&\text{ if }a\nmid n,\\\end{cases}.$$
Then
\begin{eqnarray*}
\sum_{\substack{a\in U_{pq}\\ind_{pq}(a)=\delta}}a&\equiv&\sum_{[\delta_1,\delta_2]=\delta}\mu(\delta_1)\chi_{p-1}(\delta_1)\phi(\delta_2)\chi_{q-1}(\delta_2)\\
&\equiv&\mu\chi_{p-1}\circ\phi\chi_{q-1}(\delta)\\
&\equiv&\sum_{d\mid \delta}\mu(\delta/d)\sum_{x\mid d}\mu(x)\chi_{p-1}(x)\sum_{y\mid d}\phi(y)\chi_{p-1}(y)\\
&\equiv&\sum_{\delta_1\mid(\delta,p-1)}\mu(\delta_1)\sum_{\substack{\delta_2\mid q-1\\ [\delta_1,\delta_2]=\delta}}\phi(\delta_2)\pmod p.
\end{eqnarray*}

If $(\frac{\delta}{(\delta,p-1)},p-1)\neq1$, then there exists some prime $r$ that divides $(\frac{\delta}{(\delta,p-1)},p-1)$.
Hence, $ord_r(\delta)>ord_r((\delta,p-1))$. Moreover, $ord_r(\delta)>ord_{r}(p-1)>0.$

Let $\delta_1$ be a divisor of $(\delta,p-1)/r$. Then $ord_{r}(\delta_1)<ord_{r}(p-1)<ord_{r}(\delta)$. Thus, $ord_{r}(r\delta_1)<ord_{r}(\delta).$ Therefore, $[\delta_1,\delta_2]=\delta$ if and only if $[r\delta_1,\delta_2]=\delta$, which leads to the following
$$\sum_{\delta_1\mid(\delta,p-1)}\mu(\delta_1)\sum_{\substack{\delta_2\mid q-1\\ [\delta_1,\delta_2]=\delta}}\phi(\delta_2)=\sum_{\delta_1\mid(\delta,p-1)/r}(\mu(\delta_1)+\mu(r\delta_1))\sum_{\substack{\delta_2\mid q-1\\ [\delta_1,\delta_2]=\delta}}\phi(\delta_2)=0.$$
Therefore we proved the case of $(\frac{\delta}{(\delta,p-1)},p-1)>1$.

Assume $(\frac{\delta}{(\delta,p-1)},p-1)=1$. If $(\delta,p-1)=1$, then
$$\sum_{\delta_1\mid(\delta,p-1)}\mu(\delta_1)\sum_{\substack{\delta_2\mid q-1\\ [\delta_1,\delta_2]=\delta}}\phi(\delta_2)=\phi(\delta).$$
This proves the theorem.

If $(\delta,p-1)>1$, then $ord_{r}(\delta)\leq ord_{r}(p-1)$ for any prime $r$ that divides $(\delta,p-1)$. Thus, $(\frac{\delta}{(\delta,p-1)},\delta_1)=1$. Hence,
\begin{eqnarray*}
\sum_{\delta_1\mid(\delta,p-1)}\mu(\delta_1)\sum_{\substack{\delta_2\mid q-1\\ [\delta_1,\delta_2]=\delta}}\phi(\delta_2)&=&\sum_{\delta_1\mid(\delta,p-1)}\mu(\delta_1)\sum_{\substack{\delta_2\mid q-1\\ [\delta_1,\delta_2]=(\delta,p-1)}}\phi(\delta_2)\phi\Big(\frac{\delta}{(\delta,p-1)}\Big)\\
&=&\frac{\phi(\delta)}{\phi((\delta,p-1))}\mu\circ\phi\chi_{q-1}((\delta,p-1))\\
&=&\frac{\phi(\delta)}{\phi((\delta,p-1))}\mu((\delta,p-1))\phi\chi_{q-1}(1)\\
&=&\frac{\phi(\delta)}{\phi((\delta,p-1))}\mu((\delta,p-1)).
\end{eqnarray*}
This completes the proof.
\end{proof}

\begin{proof}[Proof of Theorem \ref{th:2.3.5}]

By Theorem \ref{th:2.3.3} (5) and the methods mentioned in the proof of Theorem \ref{th:2.3.4}, we can obtain the following congruence
\begin{equation}\label{final}
\sum_{\substack{a\in U_{m}\\ind_{m}(a)=\delta}}a\equiv \big(\mu\chi_{p-1}\circ\phi\chi_{\phi(p_1^{\alpha_1})}\circ\cdots\circ\phi\chi_{\phi(p_k^{\alpha_k})}\big)(\delta)\pmod{p^{\alpha}}.
\end{equation}
Now, we will show how \eqref{final} leads to \eqref{eq:th:2.3.5.2}. Let $rad(n):=\prod_{p\mid n}p$. Then for any positive integer $a$ and multiplicative function $g$,

\begin{eqnarray*}
\mu\chi_{a}\circ g(n)&=&\prod_{p^{\alpha}\parallel n}\{I((p^{\alpha},a))g*u(p^{\alpha})-I((p^{\alpha-1},a))g*u(p^{\alpha-1})\}\\
&=&I((\frac{n}{rad(n)},a))\prod_{\substack{p\parallel n\\p\mid a}}(-1)g*u(1)\prod_{\substack{p^{\alpha}\parallel n\\p\nmid a}}(g*u(p^{\alpha})-g*u(p^{\alpha-1}))\\
&=&I((\frac{n}{rad(n)},a))\prod_{\substack{p\parallel n\\p\mid a}}(-1)\prod_{p^{\alpha}\parallel \frac{n}{(n,a)}}(\mu(1)g*u(p^{\alpha})+\mu(p)g*u(p^{\alpha-1}))\\
&=&\mu((n,a))I((\frac{n}{(n,a)},a))\prod_{p^{\alpha}\parallel \frac{n}{(n,a)}}\mu*g*u(p^{\alpha})\\
&=&\mu((n,a))I((\frac{n}{(n,a)},a))g(\frac{n}{(n,a)})
\end{eqnarray*}

For any positive integers $b_1,$ $\cdots$, $b_k$,
\begin{eqnarray*}
\phi\chi_{b_1}\circ\cdots\circ\phi\chi_{b_k}(n)&=&\prod_{p^{\alpha}\parallel n}\{\sum_{j=0}^{\alpha}\mu(p^{\alpha-j})\prod_{i=1}^{k}(\phi\chi_{b_i}*u)(p^j)\}\\
&=&\prod_{p^{\alpha}\parallel n}\{\sum_{j=0}^{\alpha}\mu(p^{\alpha-j})\prod_{i=1}^{k}(\phi\chi_{b_i}*u)(p^j)\}\\
&=&\prod_{p^{\alpha}\parallel n}\{\prod_{i=1}^{k}(\phi\chi_{b_i}*u)(p^{\alpha})-\prod_{i=1}^{k}(\phi\chi_{b_i}*u)(p^{\alpha-1})\}\\
&=&\prod_{p^{\alpha}\parallel n}\prod_{ord_p(b_i)<\alpha}p^{ord_p(b_i)}\prod_{ord_p(b_i)\geq\alpha}(p^{\alpha}-p^{\alpha-1}).
\end{eqnarray*}

Combining these two formulas, we have
\begin{eqnarray*}
&&\big(\mu\chi_{p-1}\circ\phi\chi_{\phi(p_1^{\alpha_1})}\circ\cdots\circ\phi\chi_{\phi(p_k^{\alpha_k})}\big)(\delta)\\
&=&\mu((\delta,p-1))I((\frac{\delta}{(\delta,p-1)},p-1))F(\phi(p_1^{\alpha_1}),\cdots,\phi(p_k^{\alpha_k});\frac{\delta}{(\delta,p-1)}).
\end{eqnarray*}
\end{proof}

\subsection*{Acknowledgments}
This work is supported by the National Natural Science Foundation of China (Grant No. 11571303) and the Natural Science Foundation of Zhejiang Province (No. LY18A010016).

\bibliographystyle{amsplain}

\end{document}